\begin{document}
\textwidth 5.5in
\textheight 8.3in
\evensidemargin .75in
\oddsidemargin.75in

\theoremstyle{definition}	
\newtheorem{dummy}{Dummy}[section]
\newtheorem{lem}[dummy]{Lemma}
\newtheorem{claim}[dummy]{Claim}
\newtheorem{facts}[dummy]{Fact}
\newtheorem{conj}[dummy]{Conjecture}
\newtheorem{defi}{Definition}[section]
\newtheorem{thm}[dummy]{Theorem}
\newtheorem{cor}[dummy]{Corollary}
\newtheorem{lis}{List}[section]
\newtheorem{prob}[dummy]{Problem}
\newtheorem{rmk}{Remark}[section]
\newtheorem{exm}[dummy]{Example}
\newtheorem{que}[dummy]{Question}
\newtheorem{prop}[dummy]{Proposition}
\newtheorem{clm}{Claim}[section]
\newcommand{\p}[3]{\Phi_{p,#1}^{#2}(#3)}
\def\Z{\mathbb Z}
\def\R{\mathbb R}
\def\b{\text{b}}
\def\g{\overline{g}}
\def\odots{\reflectbox{\text{$\ddots$}}}
\newcommand{\tg}{\overline{g}}
\def\ee{\epsilon_1'}
\def\ef{\epsilon_2'}
\title{Exotic elliptic surfaces without 1-handles}
\author{Motoo Tange}
\thanks{The author was partially supported by JSPS KAKENHI Grant Number 21K03216.}
\subjclass{57R65, 57R55}
\keywords{handle decomposition, elliptic surfaces, knot-surgery, log-transformation}
\address{Institute of Mathematics, University of Tsukuba,
 1-1-1 Tennodai, Tsukuba, Ibaraki 305-8571, Japan}
\email{tange@math.tsukuba.ac.jp}
\date{\today}

\begin{abstract}
In this article, we consider a sufficient condition that a knot-surgery or log-transformation of $E(n)$ admits a handle decomposition without 1-handles.
We show that if $K$ is a knot that the bridge number is $b(K)\le 9n$, then the knot-surgery $E(n)_K$ of the elliptic surface $E(n)$ admits a handle decomposition without 1-handles.
This means that if $\gcd(p,q)=1$, and $\min\{p,q\}\le 9$, then $E(1)_{p,q}$ admits a handle decomposition without 1-handles.
We also show that if $\gcd (p,q)=1$, $\min\{p,q\}\le 4$, then the double log-transformation $E(n)_{p,q}$ admits a handle decomposition without 1-handles for any positive integer $n$.
\end{abstract}
\maketitle
\section{Introduction}
\subsection{Handle decomposition of a 4-manifold.}
A long-standing interesting problem on smooth 4-manifolds is whether any simply-connected closed smooth 4-manifold admits a handle decomposition without 1-handles or 3-handles.

The following is the classical main problem about 1-handles and 3-handles of closed smooth 4-manifolds.
\begin{prob}[Kirby's problem~\cite{Kir}]
\label{mainprob}
Let $X$ be a simply-connected closed smooth 4-manifold.
\begin{itemize}
\item Does $X$ admit a handle decomposition without 1-handles?
\item Does $X$ admit a handle decomposition without 1-handles and 3-handles?
\end{itemize}
\end{prob}

Several affirmative evidences about Problem~\ref{mainprob} as below has been known.
These problems are essentially difficult to resolve.
In fact, if any homotopy $S^4$ admits a handle decomposition without 1-handles and 3-handles, the 4-dimensional smooth Poincar\'e conjecture is resolved affirmatively.

There are some simply-connected closed smooth 4-manifolds which have so complicated handle decompositions, and it does not seem that each of simply-connected closed 4-manifolds admits a handle decomposition without 1-handles and 3-handles.
\subsection{Handle decomposition of $E(n)_{K}$ or $E(n)_{p,q}$}
From the historical point of view, exotic 4-manifolds that have been known as typical examples for a long time are elliptic surface, Lefschetz fibration and their surgeries.
Let $E(n)$ be the elliptic surface with $12n$ Lefschetz singularities.
For example, double log-transformations $E(n)_{p,q}$ are homeomorphic to $E(n)$ for relatively primie integers $p,q$.
They give exotic smooth structures for $E(n)$.
Whether $E(n)_{p,q}$ admits a handle decomposition without 1-handles or 1-handles and 3-handles are not clear.

Here, we write a history on studies on 1-handles or 3-handles of $E(n)_K$, $E(n)_{p,q}$ and its blow-ups.
In the early days, Harer, Kas and Kirby in \cite{HKK} conjectured that the Dolgachev surface $E(1)_{2,3}$ requires at least one 1-handle.
Gompf said in \cite[p.481]{Go} that it is a good conjecture that $E(n)_{p,q}$ requires at least either a 1-handle or a 3-handle.
Harer-Kas-Kirby's conjecture in \cite{HKK} was negatively resolved by Yasui in \cite{Yasui} and Akbulut in \cite{Akb} independently.
Yasui in \cite{Yasui}, constructed handle decompositions of $E(n)_{p,q}$ for $(p,q)=(2,3),(2,5),(3,4)$ and $(4,5)$ for any positive integer $n$ without 1-handles.
Akbulut also in \cite{Akb1} proved that a knot surgery $E(1)_{K_n}$ of $E(1)$ for each of an infinite family $\{K_n\}$ of knots admits a handle decomposition without 1-handles and 3-handles.
It gives an exotic family of $E(1)$ and includes $E(1)_{2,3}$, hence, the conjecture above referred by Gompf was negatively answered in the case of $(n,p,q)=(1,2,3)$.
Sakamoto in \cite{Sa} constructed a handle decomposition of $E(1)_{2,7}$ without 1-handles.
Recently, Monden and Yabuguchi in \cite{Y} announced that $E(n)_{T_{2,q}}$ admits a handle decomposition without 1-handles and 3-handles for any odd integer $q$ and for any positive integer $n$.
This result immediately is the extionsion to $(n,p,q)=(1,2,q)$ of Akublut's result by uinsg $E(1)_{p,q}=E(1)_{T_{p,q}}$.
Kusuda in \cite{Kus} gave handle decompositions of $E(n)_{5,6}$, $E(n)_{6,7}$, $E(n)_{7,8}$ and $E(n)_{8,9}$ without 1-handles for $n\ge 4$, $n\ge 5$, $n\ge 9$, and $n\ge 24$ respectively.
Taki, in \cite{Taki}, studied an upper bound of the minimal number $l$ that $E(n)_{p,q}\#l\overline{{\mathbb C}P^2}$ admits a handle decomposition without 1-handles.
\subsection{Main results.}
The purpose of this article is to prove a wider sufficient condition that $E(n)_K$ or $E(n)_{p,q}$, which it is known that some of these manifolds are exotic to $E(n)$, has a handle decomposition without 1-handles, that is, we can eliminate all the 1-handles in the handle diagram.
Whether we can eliminate 3-handles after eliminating 1-handles is a more subtle problem, then we do not deal with the problem in this article.
The definitions of $E(n)_K$ and $E(n)_{p,q}$ will be done in Section~\ref{Prelimina}.

Here $b(K)$ is the bridge number of a knot $K$.
The following is the main theorem of this article.
\begin{thm}
\label{main}
Let $K$ be a knot in $S^3$ with $b(K)\le 9n$.
Then $E(n)_K$ admits a handle decomposition without 1-handles.
\end{thm}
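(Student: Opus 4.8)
The plan is to realize $E(n)_K$ by modifying a handle diagram of $E(n)$ only inside a neighborhood of a cusp fiber (a nucleus), leaving the rest of the diagram untouched. Recall that knot surgery replaces a fiber neighborhood $T^2\times D^2\subset E(n)$ by $(S^3\setminus\nu K)\times S^1$, matching the $T^2$ boundary so that the $S^1$ factor is glued to a meridian-plus-pushoff. Following the Fintushel--Stern picture as made handle-explicit by Akbulut, this amounts to taking the distinguished $0$-framed circle in the nucleus along which the regular fiber is attached, ``tying $K$ into it,'' and inserting the handles that record the product with $S^1$. So first I would fix such a nucleus inside $E(n)$ and isolate the local diagram in which the surgery takes place; away from this region $E(n)$ already carries a handle decomposition with no $1$-handles, and the $1$-handle recording the $S^1$ direction cancels against the regular-fiber $2$-handle, so it suffices to remove the $1$-handles produced locally by tying in $K$.

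The second step is to feed a minimal bridge presentation of $K$ into this local picture. I would put $K$ in bridge position with $b=b(K)$ bridges, so that $S^3$ splits along a bridge sphere into two trivial $b$-string tangles. This presentation yields a handle decomposition of the knot complement, and hence of $(S^3\setminus\nu K)\times S^1$, in which the number of $1$-handles is governed by $b$: each of the $b$ bridges (maxima of the height function on $K$) contributes one $1$-handle to the local diagram of $E(n)_K$. The effect of the knot surgery on the handle count is therefore controlled linearly by $b(K)$, and this is exactly why the bridge number, rather than a more refined invariant, is the right quantity to track.

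The decisive third step is cancellation. The key structural input I would use is that $E(n)$, built as the fiber sum of $n$ copies of the rational elliptic surface $E(1)={\mathbb C}P^2\#9\overline{{\mathbb C}P^2}$, contains $9n$ two-handles (nine per summand, arising from the exceptional spheres/sections meeting the fiber) that can be brought adjacent to the surgery region and slid to cancel the $1$-handles introduced above. Distributing the $b(K)$ bridges among the $n$ fiber-sum blocks, nine to a block, each block should absorb up to nine of the bridge $1$-handles by a geometric cancelling pair. Since $b(K)\le 9n$, there would then be enough cancelling $2$-handles to remove every $1$-handle, so the resulting diagram of $E(n)_K$ has none.

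The hard part will be this last step: one must actually exhibit the handle slides that realize each cancellation geometrically, i.e.\ arrange that a chosen $2$-handle runs geometrically once over exactly one bridge $1$-handle and is otherwise disjoint from the attaching data, so that a $1$/$2$-handle cancelling pair is formed without creating new $1$-handles elsewhere. Verifying that the nine cancelling $2$-handles of a single block suffice to absorb nine bridges, and that this can be carried out simultaneously and independently across the $n$ blocks of the fiber sum, is the technical heart of the argument and is precisely where the constant $9n$ must be pinned down.
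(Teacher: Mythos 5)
Your overall skeleton matches the paper's: put $K$ in a minimal bridge position so that $(S^3\setminus\nu(K))\times S^1$ contributes $b(K)+1$ one-handles to the diagram of $E(n)_K$, and then look for enough $2$-handles of $E(n)-\nu(T)$ attached along meridians of $K$ to cancel them. The genuine gap is in where your $9n$ cancelling $2$-handles come from. You take them to be the nine exceptional spheres of each $E(1)={\mathbb C}P^2\#9\overline{{\mathbb C}P^2}$ summand, ``nine per block.'' That count fails: the exceptional spheres are sections, each meeting the fiber once, so in the fiber sum $E(n)=E(1)\#_T\cdots\#_T E(1)$ the punctured sections of the $n$ blocks must be glued to one another across the gluing tori. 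One obtains only $9$ sections of $E(n)$ (each of square $-n$), not $9n$ disjoint spheres meeting the surgery region; your mechanism would therefore supply at most $9$ meridional cancelling $2$-handles and prove the statement only for $b(K)\le 9$, independently of $n$.

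The missing idea, and the actual source of the constant $9n$, is a manipulation of the global monodromy of the Lefschetz fibration rather than of the exceptional classes. With $a=\begin{pmatrix}1&1\\0&1\end{pmatrix}$ and $b=\begin{pmatrix}1&0\\-1&1\end{pmatrix}$, the paper rewrites $(ab)^{6n}=(a^2ba^3ba^3ba)^n\sim(a^3ba^3ba^3b)^n$, so that $9n$ of the $12n$ vanishing cycles become parallel copies of the single curve $a$ on the fiber. These give $9n$ parallel $(-1)$-framed $2$-handles; since the gluing map sends the meridian $m$ of $K$ to $\lambda_1$, these handles sit on meridians of $K$ after the surgery and can be slid over the $0$-framed $2$-handles of the bridge-position diagram onto the meridians of the $b(K)$ bridge $1$-handles, forming geometric cancelling pairs (the one remaining $1$-handle, from the $S^1$ direction, is cancelled separately, as you anticipated). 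Your final paragraph correctly identifies the cancellation as the technical heart, but without the monodromy factorization there is no supply of $9n$ meridional $2$-handles with which to carry it out, and the bound $b(K)\le 9n$ cannot be reached by the route you describe.
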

This theorem is applicable to the case of double log-transformation of $E(1)$, because of $E(1)_{T_{p,q}}=E(1)_{p,q}$.
See \cite{F} for this equality.
Then we obtain the following immediately.
Here, for relatively prime positive integers $p,q$, $T_{p,q}$ stands for the right-handed $(p,q)$-torus knot.
\begin{cor}
\label{cor}
Let $p,q$ be relatively prime positive integers.
Then, if $\min\{p,q\}\le 9$, then $E(1)_{p,q}$ admits a handle decomposition without 1-handles.
\end{cor}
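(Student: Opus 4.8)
The plan is to derive Corollary~\ref{cor} directly from Theorem~\ref{main} by translating a statement about torus knots into the statement about log-transformations, using the identity $E(1)_{p,q}=E(1)_{T_{p,q}}$ recorded in \cite{F}. So the first step is simply to invoke this identity: whatever handle-theoretic conclusion we can establish for the knot-surgery $E(1)_{T_{p,q}}$ transfers verbatim to $E(1)_{p,q}$, since the two manifolds are diffeomorphic.

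Second, I would reduce the hypothesis $\min\{p,q\}\le 9$ to the bridge-number hypothesis of Theorem~\ref{main} in the case $n=1$, namely $b(K)\le 9$. Here $K=T_{p,q}$ is the right-handed $(p,q)$-torus knot, and the key classical fact is that the bridge number of a torus knot is $b(T_{p,q})=\min\{p,q\}$ (for relatively prime $p,q$); this is a well-known result of Schubert on the bridge number of torus knots. Granting this, the hypothesis $\min\{p,q\}\le 9$ is \emph{equivalent} to $b(T_{p,q})\le 9=9\cdot 1$, which is exactly the bridge-number bound required to apply Theorem~\ref{main} with $n=1$ and $K=T_{p,q}$.

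Third, assembling these pieces completes the argument: by Theorem~\ref{main} applied to $K=T_{p,q}$, the knot-surgery $E(1)_{T_{p,q}}$ admits a handle decomposition without 1-handles; by the identification $E(1)_{p,q}=E(1)_{T_{p,q}}$, the manifold $E(1)_{p,q}$ admits such a decomposition as well. One should be slightly careful that $\min\{p,q\}\le 9$ includes the degenerate cases where one of $p,q$ equals $1$, but then $T_{p,q}$ is the unknot and $E(1)_{p,q}=E(1)$, which admits a standard handle decomposition without 1-handles, so these cases are harmless.

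I expect essentially no genuine obstacle here, since the corollary is a routine specialization; the only substantive external input is Schubert's computation of the bridge number of torus knots, which I would cite rather than reprove. The one point deserving a line of care is the bookkeeping on the bridge number: ensuring that the convention $b(T_{p,q})=\min\{p,q\}$ (rather than, say, $\min\{p,q\}+1$ under a competing convention for the bridge number of the unknot) matches the convention for $b(K)$ used in Theorem~\ref{main}, so that the numerical bound $9n$ with $n=1$ lines up exactly with $\min\{p,q\}\le 9$.
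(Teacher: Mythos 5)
Your proposal is correct and follows exactly the paper's own argument: apply Schubert's identity $b(T_{p,q})=\min\{p,q\}$, the identification $E(1)_{p,q}=E(1)_{T_{p,q}}$ from \cite{F}, and Theorem~\ref{main} with $n=1$. The extra remarks on the unknot case and bridge-number conventions are harmless refinements of the same proof.
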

\begin{proof}
The bridge number of a torus knot $T_{p,q}$ is $\min\{p,q\}$.
Thus, equality $E(1)_{T_{p,q}}=E(1)_{p,q}$ and Theorem~\ref{main} make $E(1)_{p,q}$ admit a handle decomposition without 1-handles.
\end{proof}

This result can also be partially extended to more general $E(n)_{p,q}$.
\begin{thm}
\label{main3}
Let $n$ be a positive integer.
Let $p,q$ be relatively prime positive integers.
If $\min\{p,q\}\le 4$, then $E(n)_{p,q}$ admits a handle decomposition without 1-handles.
\end{thm}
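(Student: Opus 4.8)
The plan is to reduce the statement to an explicit handle calculation localized in a single Gompf nucleus, rather than to Theorem~\ref{main}. Unlike the situation of Corollary~\ref{cor}, for $n\ge 2$ the double log transform $E(n)_{p,q}$ is in general not diffeomorphic to any knot surgery $E(n)_{K}$ (the two constructions act differently on the Seiberg--Witten invariants), so Theorem~\ref{main} cannot be invoked directly and the logarithmic transformations must be handled by Kirby calculus. After relabelling we may assume $q=\min\{p,q\}\le 4$. Both logarithmic transformations can be arranged on two regular fibers lying in a single nucleus $N(n)\subset E(n)$ whose neighborhood contains a cusp fiber, while the complement $E(n)\setminus N(n)$ is fixed, once and for all, with a handle decomposition without 1-handles. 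Since the modification is supported in $N(n)$, it suffices to produce a diagram of the doubly transformed nucleus $N(n)_{p,q}$ that has no 1-handles and that glues to the fixed complement along the boundary $T^3$ without creating any. I would begin from the standard diagram of the cusp neighborhood --- a $0$-framed trefoil together with the two regular-fiber tori to be transformed --- and realize each logarithmic transformation as an explicit torus surgery there.

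The heart of the argument is to draw $N(n)_{p,q}$ without 1-handles, and here the two multiplicities are treated asymmetrically. The multiplicity-$p$ transform, with $p$ arbitrary, should be absorbed into the diagram as framed Dehn-surgery data --- a $p$-fold twist region on a cabled curve --- which contributes no 1-handles for any value of $p$; thus the unbounded multiplicity costs nothing. The multiplicity-$q$ transform is then inserted explicitly for $q=2,3,4$ (the case $q=1$ being vacuous), generalizing Yasui's diagrams for $(p,q)=(2,3),(2,5),(3,4),(4,5)$. This transform contributes a number of 1-handles bounded in terms of $q$ alone, which play the role that the bridge number plays in the proof of Theorem~\ref{main}, and which I would cancel against the 2- and 3-handles supplied by the single cusp of $N(n)$ by explicit handle slides. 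This single-nucleus cancellation budget is exactly what pins the threshold at $4$ and makes the bound independent of $n$: whereas the knot-surgery argument may draw on all $12n$ vanishing cycles of $E(n)$ and so reaches $9n$, the log-transform calculus here is confined to one cusp.

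It then remains to re-glue $N(n)_{p,q}$ to the fixed 1-handle-free decomposition of $E(n)\setminus N(n)$ and to check that the boundary identification creates no new 1-handles, which completes the decomposition. I expect the main obstacle to be the explicit cancellation at $q=4$: producing handle moves that remove the resulting 1-handles against the cusp handles, and verifying that these moves remain valid in the presence of the arbitrary multiplicity-$p$ twist region, so that the two local modifications can be performed simultaneously without the $p$-cabling obstructing the $q$-cancellations. A secondary point is to confirm that the surgery description really yields the double log transform, with the correct multiple-fiber framings; I would verify this by tracking the fiber curves through the diagram rather than by any invariant computation.
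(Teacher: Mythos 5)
Your overall strategy --- localize both logarithmic transformations in a Gompf nucleus $N(n)$, draw $N(n)_{p,q}$ directly, and cancel its 1-handles against handles coming from the single cusp --- is genuinely different from the paper's route. The paper does \emph{not} abandon the knot-surgery point of view for $n\ge 2$; it corrects the gluing: Proposition~\ref{logknot} identifies $E(n)_{p,q}$ with a \emph{twisted} gluing of $(S^3-\nu(T_{p,q}))\times S^1$ to $E(n)-\nu(T)$ (with $\phi'(m)=d$ and $\phi'(l)=\lambda_1-pq\cdot d$), so the 1-handles to be killed are again the $b(T_{p,q})+1=\min\{p,q\}+1$ 1-handles of the torus-knot exterior times $S^1$, exactly as in Theorem~\ref{main}, and the new difficulty is only to manufacture enough meridian 2-handles despite the twisting. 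You are right that Theorem~\ref{main} cannot be invoked verbatim for $n\ge2$, but the conclusion you draw from this --- that the problem must be attacked by nucleus-local Kirby calculus generalizing Yasui --- is not forced.

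More importantly, the proposal has a genuine gap at its quantitative core. The entire content of the theorem is the explicit cancellation, and you defer precisely that step (``the main obstacle to be the explicit cancellation at $q=4$'') without supplying it; worse, the budget you propose to draw on --- ``the 2- and 3-handles supplied by the single cusp of $N(n)$'' --- is insufficient, and the claim that this budget ``pins the threshold at $4$ and makes the bound independent of $n$'' is unsupported. A single cusp contributes only two vanishing cycles. In the actual proof the obstruction is the $-n$-framed section 2-handle: after the twisted gluing, the meridian 2-handles produced by 2-/3-canceling pairs acquire mutual linking $-n$, and untying this requires sliding over a $-2$-chain of length $n+1$ for the first doubling and of total length $4n+5$ for the second. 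That chain is assembled from the $9n$ parallel vanishing cycles of the \emph{whole} fibration (the $a$'s in the word $(a^3ba^3ba^3b)^n$), not from one cusp; the bound $\min\{p,q\}\le 4$ is independent of $n$ because the available length $9n-1$ and the required length grow at comparable linear rates, not because the construction is local. Any nucleus-only argument must explain how to resolve an $n$-dependent linking problem with an $n$-independent supply of handles, and your sketch does not. Two smaller points: the case $q=1$ is not vacuous for $n\ge2$, since $E(n)_{p,1}\cong E(n)_p$ is already exotic and still needs a 1-handle-free diagram; and the assertion that the multiplicity-$p$ transform ``contributes no 1-handles for any $p$'' is exactly the kind of statement requiring proof --- in the paper it follows from $b(T_{p,q})=\min\{p,q\}$ being independent of $\max\{p,q\}$, not from any twist-region absorption.
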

Recall that the case of $n=1$ is already proven in Corollary~\ref{cor}.
This theorem also encompasses the results of Yasui in \cite{Yasui}.
To prove this proposition, we use the fact that any log-transformation is regarded as a ``twisted" knot-surgery. 
Here, we ask the following question.

%


\begin{que}
Does $E(1)_{10,11}$ admit a handle decomposition without 1-handles?
\end{que}

\section*{Acknowledgements}
The first main result was essentially inspired by works of Akbulut and Yasui fifteen years ago.
However, apparently it has been less known ever for experts. 
This article was inspired by talks of Yabuguchi and Kusuda in the workshop ``Four Dimensional Topology" on October in 2024 held at Osaka.
The author is grateful for their results so much.
Also, he thanks Kouichi Yasui for giving some helpful advice for an early draft.

\section{Preliminaries}
\label{Prelimina}
\subsection{Knot-surgery}
\label{knotsurgery}
Let $X$ be a 4-manifold in which the trivial tubular neighborhood of a torus $T$ is contained.
Hence, we can take a diffeomorphism $\nu(T)\cong T\times D^2$.
Here, $\nu(A)$ is a tubular neighborhood of a submanifold $A$.
Let $K\subset S^3$ be a knot.
Then, {\it knot-surgery} is defined by the following surgery:
$$X_K:=(X-\nu(T))\cup_\phi (S^3-\nu(K))\times S^1.$$
The gluing map $\phi:\partial\nu(K)\times S^1\to T\times \partial D^2$ satisfies $\phi(m)=\lambda_1$, $\phi(s)=\lambda_2$, $\phi(l)=d$,
where $m,l\subset \partial\nu(K)\times \{\text{pt}\}$ are the meridian and longitude of $K$ respectively, $s$ is $\{\text{pt}\}\times S^1$, $d$ is the meridian circle $\{\text{pt}\}\times \partial D^2$ of $T$, and $\lambda_1, \lambda_2\subset T\times \{\text{pt}\}$ are two simple closed circles generating $H_1(T)$.

\subsection{Bridge presentation of knot}
Let $K$ be a knot in $S^3$.
If there exists a decomposition $S^3=B^3_0\cup B^3_1$ with $B^3_0\cap B^3_1=\partial B_0^3=\partial B^3_1$, such that $B_i^3$ is the 3-ball and the intersection $K\cap B_i=K_i$ is a set of $n$ boundary-parallel arcs in the 3-ball.
This is called an {\it $n$-bridge presentation} of $K$.
We define the bridge number
$$b(K)=\min\{n|K\text{ admits an $n$-bridge presentation}\}.$$
A knot with $b(K)=1$ is the trivial knot.
For example, a knot with $b(K)=2$ is called a 2-bridge knot.

It is easy to show that an $n$-bridge presentation of $K$ has a normal form as in Figure~\ref{normalformnbridge}.
\begin{figure}[thbp]
\centering
\begin{overpic}[
]{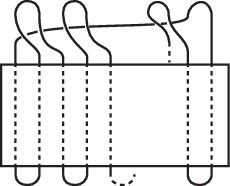}
\put(58,28){$B$}
\end{overpic}
\caption{A normal form of an $n$-bridge knot.
$B$ is a pure braid.}  
\label{normalformnbridge}
\end{figure}

\begin{facts}
Let $K$ be a knot with an $n$-bridge presentation.
Then, we can find a bridge presentation as Figure ~\ref{normalformnbridge} using a pure braid $B$ of a $2n$-string.
\end{facts}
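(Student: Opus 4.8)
The plan is to read the required form off a height-function model of the bridge presentation and then to spend the real effort on forcing the middle tangle to be a \emph{pure} braid. First I would choose a Morse function $h$ on $S^3$ adapted to the splitting $S^3=B_0^3\cup B_1^3$ so that, after a small isotopy, $K$ meets the upper ball $B_0^3$ exactly in its $n$ maxima, meets the lower ball $B_1^3$ exactly in its $n$ minima, and is monotone (transverse to the level spheres, with no critical points) on the middle region $h^{-1}([-1,1])$. Since $K\cap B_i$ consists of boundary-parallel arcs, this is precisely the statement that each trivial $n$-tangle can be isotoped, rel its $2n$ endpoints, into a family of unknotted unlinked arcs with a single extremum apiece: the $n$ maxima become $n$ caps, the $n$ minima become $n$ cups, and $K\cap h^{-1}([-1,1])$ is a system of $2n$ monotone strands, i.e.\ a braid $\beta\in B_{2n}$. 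Combing the arcs of the upper tangle into the standard position drags the middle tangle along and alters $\beta$ only by multiplication by a braid; doing the same to the lower tangle brings the caps and cups to the standard pattern drawn in Figure~\ref{normalformnbridge}. This part I regard as routine.

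The substantive step is to replace $\beta$ by a pure braid without changing $K$ or the cap/cup pattern. The plan is to exploit the interaction between a crossing and an adjacent cap: if a generator $\sigma_i^{\pm1}$ sits immediately above a cup joining the two strands it crosses, then the crossing can be absorbed by an isotopy of the trivial lower tangle, because a cup preceded by a half-twist is isotopic, rel endpoints, to a cup. I would record the permutation $\pi=\pi(\beta)\in S_{2n}$ induced by $\beta$, choose a canonical word $w_\pi\in B_{2n}$ realizing it, and write $\beta=P\,w_\pi$ with $P=\beta w_\pi^{-1}$ pure, where $w_\pi$ is placed toward the cups. Feeding the generators of $w_\pi$ one at a time into the lower trivial tangle by this absorption move transfers the permutation from the braid to a rearrangement of the endpoints of the cups; since the cups, so rearranged, are again $n$ unknotted unlinked arcs, they can be re-standardized to the prescribed pattern of Figure~\ref{normalformnbridge}, and what survives in the middle is the pure braid $P$.

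The hard part will be the bookkeeping in this last transfer. I must check that sliding the generators of $w_\pi$ past the crossings of $P$ and into the cup region genuinely terminates—each absorption should strictly decrease a complexity, for instance the number of inversions in the word realizing $\pi$—and that it never knots or links the lower arcs, so that the result is still a \emph{trivial} $n$-tangle rather than a merely immersed one. Equivalently, I must verify that transferring $\pi$ to the cups reproduces exactly the standard matching of Figure~\ref{normalformnbridge} and keeps $K$ a single component, i.e.\ that the cap/cup pattern together with the one-cycle condition is compatible with the pure-braid form. Once these two points are settled, the caps, the pure braid $P=B$, and the cups assemble into the claimed normal form, which proves the Fact.
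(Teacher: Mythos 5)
The paper itself gives no proof of this Fact (it is asserted as ``easy to show''), so I am judging your proposal on its own merits. Your first half --- Morse position with $n$ caps, $n$ cups, a monotone middle giving $\beta\in B_{2n}$, and combing both trivial tangles to the standard pattern of Figure~\ref{normalformnbridge} --- is correct and routine. The gap is in the second half. Your absorption mechanism feeds crossings only into the lower trivial tangle, and the braids that can be absorbed there (i.e.\ the braids $v$ with $v\cdot(\text{standard cups})$ isotopic to a trivial tangle that re-standardizes to the same cup pattern) have permutation image exactly the stabilizer $\mathrm{Stab}(\epsilon')\cong \mathbb{Z}_2\wr S_n$ of the cup matching $\epsilon'$ in $S_{2n}$: your local move, a $\sigma_i^{\pm1}$ sitting over the cup joining the same two strands, realizes only the $\mathbb{Z}_2$ factors, and even adding cup-permuting isotopies you never leave $\mathrm{Stab}(\epsilon')$, a subgroup of index $(2n-1)!!$. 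Hence after absorbing $w_\pi$ and re-standardizing, the middle braid has permutation $\pi s$ for some $s\in\mathrm{Stab}(\epsilon')$, which is pure only when $\pi\in\mathrm{Stab}(\epsilon')$; for a general bridge position your procedure terminates with a non-pure braid. The step you defer as ``bookkeeping'' is the crux, and it cannot be closed from below alone.

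The repair is to absorb into both the caps and the cups, replacing $\beta$ by $h\beta h'$ with $\pi(h)\in\mathrm{Stab}(\epsilon)$ and $\pi(h')\in\mathrm{Stab}(\epsilon')$ (both stabilizers are realized by braid isotopies of the respective trivial tangles --- this is where Hilden's subgroup enters), and then to \emph{use} the hypothesis that $K$ is a knot rather than merely check it survives. The number of components of the plat is determined by the cycle structure of the pair of matchings $\epsilon$ and $\pi\epsilon'\pi^{-1}$, and this cycle structure is precisely the invariant that classifies the double cosets $\mathrm{Stab}(\epsilon)\backslash S_{2n}/\mathrm{Stab}(\epsilon')$. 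Since the offset cap/cup pattern of Figure~\ref{normalformnbridge} closes the identity braid into a single circle, the one-component condition forces $\pi$ into the same double coset as the identity, i.e.\ $\pi=\pi(h)^{-1}\pi(h')^{-1}$ for suitable $h,h'$, and then $h\beta h'$ is pure. Connectivity is not optional here: a $2$-component link can never be put in this normal form with a pure braid, since a pure braid between these caps and cups always closes to one component. Any correct proof must make this double-coset (equivalently, connectivity) argument explicit.
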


The set of generators of the pure braid group $PB_n$ with an $n$-string is well-known.
\begin{facts}
The pure braid group $PB_n$ has the following set of generators $\{\mathcal{T}_{i,j}\mid 1\le i<j\le n\}$ as in Figure~\ref{purebraidgenerator}.
\end{facts}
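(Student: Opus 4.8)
The plan is to realize $PB_n$ as the fundamental group of a configuration space and to induct on $n$ via the Fadell--Neuwirth fibration. Concretely, I would identify $PB_n$ with $\pi_1(F_n)$, where $F_n=\{(z_1,\dots,z_n)\in\mathbb{C}^n\mid z_i\ne z_j\text{ for }i\ne j\}$ is the ordered configuration space of $n$ points in the plane; under this identification a pure braid is the homotopy class of a motion of $n$ labelled points, and the generator $\mathcal{T}_{i,j}$ is the class in which the $j$-th point makes a small loop encircling the $i$-th point while the others stay fixed, with the crossing convention of Figure~\ref{purebraidgenerator}.

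First I would set up the forgetful map $p\colon F_n\to F_{n-1}$ dropping the last coordinate. By Fadell--Neuwirth this is a locally trivial fibration whose fiber over $(z_1,\dots,z_{n-1})$ is the punctured plane $\mathbb{C}\setminus\{z_1,\dots,z_{n-1}\}$. Since $F_{n-1}$, $F_n$, and the fiber are all aspherical, the homotopy long exact sequence collapses to a short exact sequence
\[
1\longrightarrow \pi_1\bigl(\mathbb{C}\setminus\{z_1,\dots,z_{n-1}\}\bigr)\longrightarrow PB_n\longrightarrow PB_{n-1}\longrightarrow 1 .
\]
The fiber group is free of rank $n-1$, with a basis given by the loops of the last point around each of the other $n-1$ points, that is, by $\mathcal{T}_{1,n},\dots,\mathcal{T}_{n-1,n}$. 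This fibration admits a section (place the last point far from the others, continuously in the base), so the sequence splits and every element of $PB_{n-1}$ lifts to a genuine pure braid on $n$ strands in which the new strand runs straight.

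Then I would induct on $n$. The base case $n=2$ is immediate, since $PB_2\cong\mathbb{Z}$ is generated by $\mathcal{T}_{1,2}$. For the inductive step, the short exact sequence shows that $PB_n$ is generated by lifts of a generating set of $PB_{n-1}$ together with the fiber generators $\mathcal{T}_{i,n}$ for $1\le i\le n-1$. By the inductive hypothesis $PB_{n-1}$ is generated by $\{\mathcal{T}_{i,j}\mid 1\le i<j\le n-1\}$, and taking the section-lifts of these to be the corresponding braids with a straight $n$-th strand, I conclude that $PB_n$ is generated by the full collection $\{\mathcal{T}_{i,j}\mid 1\le i<j\le n\}$.

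I expect the genuinely delicate point to be the identification of the free basis of the punctured-plane fiber with the geometric generators $\mathcal{T}_{i,n}$ of Figure~\ref{purebraidgenerator}: one must check that under the connecting map the loop of the $n$-th point around the $i$-th point is carried to exactly the depicted braid, with the correct over/under convention, rather than to a conjugate of it. An alternative, more pictorial route that avoids fibrations altogether is Artin's combing: starting from any pure braid diagram one straightens the strands one at a time from the top, at each stage factoring the elementary motion of a single strand past the others as a product of the $\mathcal{T}_{i,j}$. The bookkeeping of which generator arises at each crossing is then the combinatorial analogue of the same delicate step.
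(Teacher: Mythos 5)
Your argument is correct, and it is the standard proof of this classical fact (due essentially to Artin). For comparison: the paper offers no proof at all --- it records this statement as a well-known Fact and immediately uses the generators $\mathcal{T}_{i,j}$ to encode the pure braid $B$ in the normal form of a bridge presentation --- so there is no argument in the text to measure yours against. Your route via the Fadell--Neuwirth fibration $F_n\to F_{n-1}$, the resulting split short exact sequence $1\to F_{n-1}\to PB_n\to PB_{n-1}\to 1$ with free fiber group generated by $\mathcal{T}_{1,n},\dots,\mathcal{T}_{n-1,n}$, and induction on $n$ is exactly how this is proved in the standard references (Birman, Kassel--Turaev), and you correctly isolate the one genuinely delicate point, namely matching the topological basis of the fiber with the pictured generators up to the over/under convention; your fallback via Artin's combing is the same argument in combinatorial clothing. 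One small caution: for the purposes of this paper the generating set alone suffices, but if you ever need the full presentation (the relations among the $\mathcal{T}_{i,j}$), the splitting is not a direct product decomposition --- the extension is only semidirect --- so the conjugation action of the lifted $PB_{n-1}$ on the fiber generators must be computed explicitly; your proof as written correctly avoids needing this.
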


\begin{figure}[thbp]
\centering
\begin{overpic}[
]{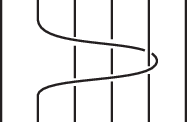}
\put(25,3){$i$}
\put(70,3){$j$}
\end{overpic}
\caption{A generator element $\mathcal{T}_{i,j}$ of $PB_n$.}  
\label{purebraidgenerator}
\end{figure}

\subsection{Handle decomposition of knot-surgery}
\label{handledecknotsurgery}
In this section, we construct a handle decomposition of knot-surgery.
This is due to \cite{A,T1,T2}.
Let $X$ be a 4-manifold which $T^2\times D^2$ is contained.
We may assume that we regard the handle decomposition of $X$ as a decomposition constructed by attaching several handles over $T^2\times D^2=h^0\cup h^1_1\cup h_2^1\cup h^2$.
Here, $h^k$ stands for a $k$-handle.
The handle decomposition of $T^2\times D^2$ is as in Figure~\ref{T2D2}.

\begin{figure}[thbp]
\centering
\begin{overpic}[
]{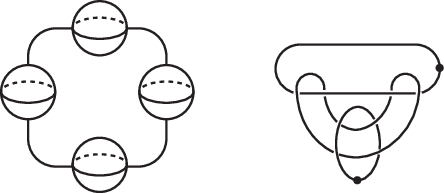}
\put(3,3){$0$}
\put(50,20){$\cong$}
\end{overpic}
\caption{A handle decomposition of $T^2\times D^2$.}
\label{T2D2}
\end{figure}

First, we insert 2-/3-canceling pairs and 1-/2-canceling pairs in the positions in the second and third pictures in Figure~\ref{handlediffeo}.
We deform the diagram as in Figure~\ref{handlediffeo}, which presents a deformation in the case of two pairs of 2-/3-canceling handles.
In general, similarly, we can also deform the diagram in the case where the number of 2-/3-canceling pairs is $n-1$.

Second, we use an $n$-bridge presentation of $K$ in a normal form using a $2n$-pure braid $B$ as shown in Figure~\ref{normalformnbridge}.
We remove $T^2\times D^2$ from $X$.
According to the presentation $B=\mathcal{T}_{i_1,j_1}\mathcal{T}_{i_2,j_2}\cdots, \mathcal{T}_{i_r,j_r}$, we deform the surgery diagram of the boundary.
For example, $\mathcal{T}_{1,2}$ and $\mathcal{T}_{3,4}$ are isotopies by twisting.
As another example, the deformation in the case of $\mathcal{T}_{2,5}$ is shown in Figure~\ref{3bridgeknot}.
These are the boundary diffeomorphisms over $\partial (S^3-\nu(K))\times S^1$.
Gluing $(S^3-\nu(K))\times S^1$ to the boundary, we obtain the diagram of $X_K$.
In the last diagram in Figure~\ref{handlediffeo}, we call a 0-framed 2-handle located in the center of the diagram a {\it centered 0-framed 2-handle} in the diagram of $(S^3-\nu(K))\times S^1$.
\begin{figure}[thbp]
\centering
\begin{overpic}[
]{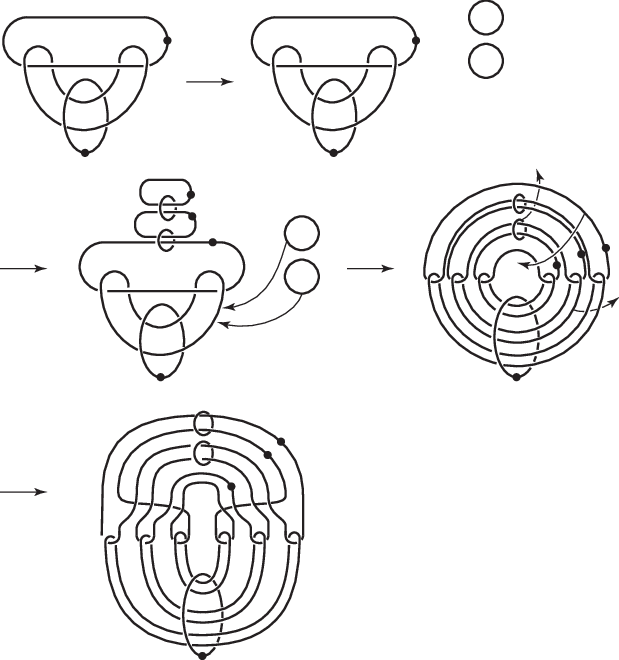}

\put(70,80){$\cup$3-handles}

\put(30,44){$\cup$3-handles}

\put(88,44){$\cup$3-handles}
\put(44,4){$\cup$3-handles}

\end{overpic}
\caption{A deformation of the handle diagram of $T^2\times D^2$. All the components with no dots are 0-framed 2-handles.}
\label{handlediffeo}
\end{figure}

\begin{figure}[thbp]
\centering
\begin{overpic}[
]{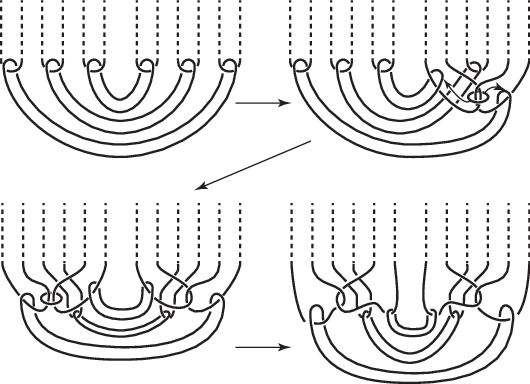}
\end{overpic}
\caption{A deformation of diagram for a generator element $\mathcal{T}_{2,5}$ in $PB_6$}  
\label{3bridgeknot}
\end{figure}
\subsection{Log-transformation}
Here, we give a short review of the log-transformation.
Let $T^2\subset X$ be an embedded torus with the trivial normal bundle.
The boundary of the tubular neighborhood $\nu(T^2)\cong T^2\times D^2$ of $T^2$ is a 3-torus $T^3$.
The generator set of $H_1(\partial (T^2\times D^2))$ is represented by $\{d,\lambda_1,\lambda_2\}$, where $d=\{\text{pt}\}\times\partial D^2 $, and $\{\lambda_1,\lambda_2\}$ presents a generator set of $H_1(T^2\times \{\text{pt}\})$.
Then we define a map $\phi:T^2\times \partial D^2\to T^2\times \partial D^2$ as follows:
$$\phi(d)=p'\cdot\gamma+p\cdot d,$$
where $p$ and $p'$ are relatively prime and $\gamma$ is some primitive element in $H_1$ of the fiber torus with $\gamma=b\cdot\lambda_1+c\cdot \lambda_2$ for some integers $b,c$.
In this paper, for two simple closed (homologically non-trivial) curves $x_1,x_2$ in a 2-torus and for $a_1, a_2\in {\mathbb Z}$, the notation $a_1\cdot x_1+a_2\cdot x_2$ stands for an isotopy class of a simple closed curve representing the element $a_1[x_1]+a_2[x_2]$ in $H_1$ of the torus.
If the circle is one component, then $\gcd(a_1,a_2)$ must be 1.

Then the {\it log-transformation} along $T^2$ is the following surgery: 
$$X_{\gamma,p',p}:=(X-\nu(T^2))\cup_\psi T^2\times D^2.$$
Here $p$ is called a {\it multiplicity} of log-transformation.

We assume $\pi_1(X-\nu(T^2))=e$.
The diffeomorphism class of the log-transformation depends only on the multiplicity, because of this $\pi_1$-condition.
We denote the log-transformation by $X_p$.
When we perform the log-transformation along two parallel tori with multiplicities $p,q$ respectively, we say the surgery {\it double log-transformation}.
We then denote the result as $X_{p,q}$.

Here we state a relation between the double log-transformation with multiplicities $p,q$ and the knot-surgery of $T_{p,q}$.
Let $T\subset X$ be an embedded torus with the trivial normal bundle.
Let $d,\lambda_1,\lambda_2,m,l,s$ be the same as the things defined in Section~\ref{knotsurgery}.
\begin{prop}
\label{logknot}
The log-transformation $X_{p,q}$ is diffeomorphic to $(X-\nu(T))\cup_{\phi'} (S^3-\nu(T_{p,q}))\times S^1$.
The gluing map $\phi':\partial (S^3-\nu(T_{p,q}))\times S^1\to \partial \nu(T)$ satisfies $\phi'(m)=d$, $\phi'(l)=\lambda_1-pq\cdot d$, and $\phi'(s)=\lambda_2$.
\end{prop}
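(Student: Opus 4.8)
The plan is to localize the surgery inside $\nu(T)\cong T^2\times D^2$, identify the reglued piece with $(S^3-\nu(T_{p,q}))\times S^1$ by recognizing the Seifert structure on the torus-knot exterior, and then read off the boundary framings. Since $\pi_1$ of the complement is assumed trivial, the diffeomorphism type of $X_{p,q}$ depends only on the multiplicities $p,q$, so I am free to choose the $\gamma$-directions of both log-transformations to be $\lambda_1$; this keeps the $\lambda_2$-direction untouched throughout and makes it the eventual $S^1$-factor.

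First I would set up the local model. The two parallel tori are $T_i=T^2\times\{x_i\}$ for $i=1,2$, and removing their neighborhoods from $T^2\times D^2$ leaves $T^2\times P\cong S^1_{\lambda_1}\times S^1_{\lambda_2}\times P$, where $P=D^2-(D_1\sqcup D_2)$ is a pair of pants. The double log-transformation reglues two copies of $T^2\times D^2$ by the maps $\psi_p,\psi_q$ of multiplicities $p$ and $q$. As each $\psi$ fixes the $\lambda_2$-factor, I can factor $S^1_{\lambda_2}$ out of the whole piece and write it as $N\times S^1_{\lambda_2}$, where $N=(S^1_{\lambda_1}\times P)\cup_{\psi_p}(S^1\times D^2)\cup_{\psi_q}(S^1\times D^2)$.

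Next I would identify $N$ with $S^3-\nu(T_{p,q})$. The space $S^1_{\lambda_1}\times P$ is the trivial circle bundle over $P$ with regular fiber $f=\lambda_1$, and $\psi_p$ sends the meridian $d_1$ of the filling solid torus to $p'\lambda_1+p\cdot d_1=p'f+p\cdot c_1$, which is exactly the filling producing a Seifert exceptional fiber of order $p$ over the first hole; similarly $\psi_q$ produces one of order $q$. Capping two of the three boundary circles of $P$ turns the base into a disk, so $N$ is the Seifert fibered space over $D^2$ with exceptional fibers of orders $p$ and $q$. This is precisely the torus-knot exterior: the Seifert fibration of $S^3$ with two exceptional fibers of orders $p,q$ has generic fiber $T_{p,q}$, and deleting a regular fiber yields $N$. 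Hence the reglued piece is $(S^3-\nu(T_{p,q}))\times S^1$, and the surviving $S^1$-factor $s=\lambda_2$ gives $\phi'(s)=\lambda_2$ at once.

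Finally I would match the remaining framings on $\partial N=\partial\nu(T_{p,q})$, which is fibered by $f=\lambda_1$ over the outer circle $c_0=\partial D^2=d$. The meridian $m$ of $T_{p,q}$, which bounds in the reglued $\nu(T_{p,q})$, is identified with this base circle, giving $\phi'(m)=d$. The crux is the longitude: the regular fiber $f=\lambda_1$ is isotopic to $T_{p,q}$ lying on the Heegaard torus, whose surface framing is the self-linking number $pq$, so $f$ is the $pq$-framed longitude, i.e. $f=l+pq\cdot m$; solving gives $\phi'(l)=\phi'(f-pq\cdot m)=\lambda_1-pq\cdot d$. The main obstacle is exactly this last computation: it requires pinning down the Seifert invariants of the two fillings, recovering the $pq$ self-linking of $T_{p,q}$ on the Heegaard torus, and checking that $c_0$ really is the torus-knot meridian with no residual fiber twist, so that the signs in $\phi'(m)=d$ and in the $-pq\cdot d$ term come out consistently.
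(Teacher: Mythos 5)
Your proposal is correct and follows essentially the same route as the paper: both identify $S^3-\nu(T_{p,q})$ with the Seifert fibered space over the disk with exceptional fibers of orders $p,q$ (equivalently, two Dehn fillings of slopes $p/u$, $q/v$ on a trivial circle bundle), take the product with the $\lambda_2$-circle to match the double log-transformation of $T^2\times D^2$, and use the $pq$-shift between the Seifert fiber slope and the knot longitude to get $\phi'(l)=\lambda_1-pq\cdot d$. The only cosmetic difference is the direction of the identification (you start from the log-transformed side and recognize the torus-knot exterior, the paper starts from the knot exterior and recognizes the log-transformed side).
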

\begin{proof}
The manifold $S^3-\nu(T_{p,q})$ is diffeomorphic to the twice Dehn surgery of $S^1\times D^2$ along two parallel curves $f_1,f_2$ parallel to $\lambda_1=S^1\times \{\text{pt}\}$ as shown in Figure~\ref{Seifert}.
The slopes of the Dehn surgeries are $p/u$ and $q/v$ where $p,q$ are degrees about the meridians of $f_1,f_2$ and satisfy $pv+qs=1$.
A slope $r/s$ of $T_{p,q}$ corresponds to a slope $r/s-pq$ of the dotted circle in the Seifert structure in the middle picture of Figure~\ref{Seifert}.
By taking the product of $S^1$, $(S^3-\nu(T_{p,q}))\times S^1$ is diffeomorphic to $(T^2\times D^2)_{(\lambda_1,u,p),(\lambda_1,v,q)}$.
Here, the $S^1$-direction is $\lambda_2$.

The meridian circle of $T_{p,q}$, i.e., $\infty$-slope in $\partial \nu(T_{p,q})$ is mapped to the $\infty$-slope of the dotted circle in Figure~\ref{Seifert}.
The longitude circle ($0$-slope) of $T_{p,q}$ is mapped to a circle representing $\lambda_1-pq\cdot d$ in $\partial \nu(T)$.
The $S^1$-direction in $(S^3-\nu(T_{p,q}))\times S^1$ is mapped to $\lambda_2$.

Hence, the gluing map $\phi':\partial(S^3- \nu(T_{p,q}))\times S^1\to \partial \nu(T)$ satisfies $\phi'(m)=d$, $\phi'(l)=\lambda_1-pq\cdot d$, and $\phi'(s)=\lambda_2$.
Then we have the following.
\begin{eqnarray*}
X_{p,q}&=&(X-\nu(T))\cup (D^2\times T^2)_{p,q}\\
&\cong& (X-\nu(T))\cup_{\phi'} (S^3-\nu(T_{p,q}))\times S^1.
\end{eqnarray*}
\end{proof}

\begin{figure}[thbp]
\centering
\begin{overpic}[
]{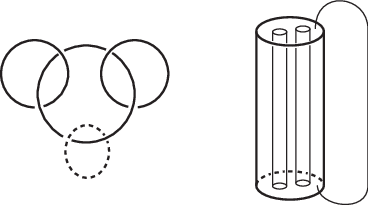}

\put(-45,33){$S^3-\nu(T_{p,q})=$}
\put(5,48){$p/u$}
\put(33,48){$q/v$}
\put(8,16){$0$}
\put(53,33){$\cong$}
\put(63,53){$D^2\times S^1$}
\put(102,30){identification}
\end{overpic}
\caption{A diffeomorphism from $S^3-\nu(T_{p,q})$ to a double log-transformation along two curves.}
\label{Seifert}
\end{figure}


\section{Proofs}
\subsection{The global monodromy of $E(n)$}
\label{Canceling 1-handle}
Let $E(n)$ be the elliptic surface with $12n$ Lefschetz singularities.
We divide $E(n)$ a neighborhood of general fiber $T$ and the complements as in $(E(n)-\nu(T))\cup T\times D^2$.
We perform the knot-surgery in $E(n)-\nu(T)$
$$E(n)_K\supset (E(n)-\nu(T))_K$$

Let two elements $a,b$ be $\begin{pmatrix}1&1\\0&1\end{pmatrix}$, and $\begin{pmatrix}1&0\\-1&1\end{pmatrix}\in SL(2,{\mathbb Z})$.
The global monodromy of $E(n)$ is $(ab)^{6n}$ and can be deformed as follows
\begin{eqnarray}
\nonumber(ab)^{6n}&=&(abababababab)^n=(a(aba)a(aba)a(aba))^n\\
&=&(a^2ba^3ba^3ba)^n\sim(a^3ba^3ba^3b)^n.\label{monodromy}
\end{eqnarray}

\subsection{Proof of Theorem~\ref{main}.}
Let $K$ be a knot with $b(K)\le 9n$.
Since $E(n)$ has $12n$ Lefschetz singularities, the handle decomposition of $E(n)$ consists of the union of $T\times D^2\cong h^0\cup h^1_1\cup h^1_2\cup h^2$, $12n$ $(-1)$-framed 2-handles and $h^2\cup h^3_1\cup h^3_2\cup h^4$.
We obtain a handle decomposition around a general fiber $T$ as in Figure~\ref{knotsurgerydef} (a).
For simplicity, here it is the case of $n=1$.
From the computation of monodromy (\ref{monodromy}), the parallel $9n$ $-1$-framed 2-handles in the figure correspond to the vanishing cycles for the element $a$.
Another $-1$-framed 2-handle corresponds to the vanishing cycle for $b$.
By removing $T\times D^2$ and gluing $(S^3-\nu(K))\times S^1$ as shown in Figure~\ref{knotsurgerydef} (b), 
the meridian circle $d$ of $T\times D^2$, through the boundary diffeomorphism, is mapped to the meridian of a 1-handle of $(S^3-\nu(K))\times S^1$ as illustrated in Figure~\ref{knotsurgerydef}.
Here, sliding some $-1$-framed 2-handles over 0-framed 2-handles we can move them to each of the meridians of other 1-handles.
We obtain the handle diagram in Figure~\ref{knotsurgerydef} (c).
Since we can give a handle decomposition having $b(K)+1$ 1-handles in $(S^3-\nu(K))\times S^1$, all 1-handles in the picture can be canceled from the condition $b(K)\le 9n$.
Since $E(n)-\nu(T)$ has no 1-handles, $E(n)_K$ admits a handle decomposition without 1-handles.\hfill$\Box$
\begin{figure}[thbp]
\centering
\begin{overpic}[
]{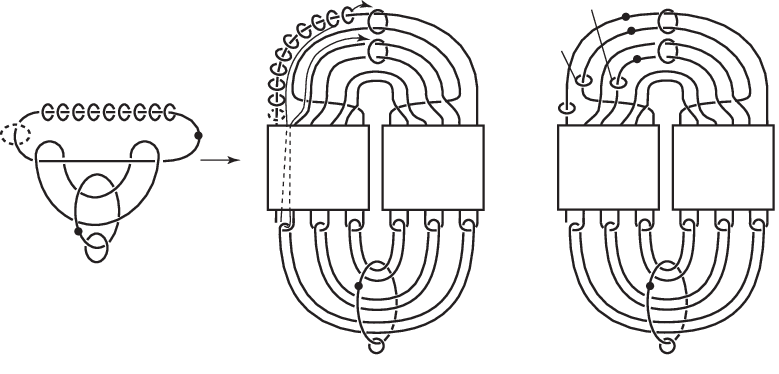}

\put(0,36){All framings are $-1$.}
\put(-2,31){$\mu$}
\put(3,20){$0$}
\put(1,8){(a)}

\put(24,43){all $-1$'s}
\put(37,26){$-B$}
\put(53,26){$B$}
\put(36,5){(b)}
\put(56,5){$\cup$3-handles}
\put(65,24){$\supset$}
\put(32,33){$\mu$}

\put(67,35){$-1$}
\put(68,43){$-1$}
\put(73,48){$-1$}
\put(75,26){$-B$}
\put(91,26){$B$}
\put(8,12){$-1$}
\put(45,0){$-1$}
\put(83,0){$-1$}
\put(73,5){(c)}
\end{overpic}
\caption{A knot-surgery near a regular fiber with $9+1$ vanishing cycles. Here $B$ is a pure braid.}
\label{knotsurgerydef}
\end{figure}

Since $b(T_{p,q})=\min\{p,q\}$, the condition $\min\{p,q\}\le 9n$ gives a handle decomposition of $E(n)_{T_{p,q}}$ without 1-handles.
\subsection{Proof of Theorem~\ref{main3}.}
Let $p,q$ be relatively prime positive integers.
We decompose $E(n)_{p,q}=((S^3-\nu(T_{p,q}))\times S^1)\cup_{\phi'} (E(n)-\nu(T))$ according to Proposition~\ref{logknot}.
The 1-handles of $E(n)_{p,q}$ are induced by the 1-handles of $(S^3-\nu(T_{p,q}))\times S^1$.
Over the meridian of the 1-handle of the $S^1$-direction, a $-1$-framed 2-handle is attached from $E(n)-\nu(T)$ through $\phi'$.
Hence, this 1-handle is canceled.
Over the 1-handle for the meridian $m$ in $(S^3-\nu(T_{p,q}))\times S^1$, a section $E(n)-\nu(T)$ is attached.
The section gives a $-n$-framed 2-handle $h_0$ in $E(n)-\nu(T)$.

We create a 0-framed 2-handle $h_1$ as a 2-/3-canceling pair in the diagram.
Slide the 0-framed 2-handle $h_0$ over the $-n$-framed 2-handle as shown in Figure~\ref{handlecal3}.
Then we obtain two meridians $h_0,h_1$ with linking $-n$.

Here, we use the parallel $9n$ 2-handles attached over $\lambda_1$.
According to the process as illustrated in Figure~\ref{chain}, we can construct a chain type link with all framings $-2$ and with the length $9n-1$.
We call this link a {\it $-2$-chain with length $9n-1$}.
Removing the $(n+2)$-nd component in the $-2$-chain, we get a $-2$-chain with length $n+1$.

Appropriately sliding the 2-component 2-handles $(h_0,h_1)$ with linking $-n$ over the $-2$-chain with length $n+1$, we can untie the $-n$-linking as shown in Figure~\ref{handlecal2}.
In this picture we illustrate the case of $n=1,2,3$ only.
The general cases are proven after this proof. 
Then, the framings of the two 2-handles $(h_0',h_1')$ become 
$$\begin{cases}(-2n-1,-2n-1)&n\text{: odd}\\(-2n,-2n-2)&n\text{: even.}\end{cases}$$
After this, removing 2-handles for the $-2$-chain with length $n+1$ from the diagram, we obtain two meridian 2-handles. 
We slide one of the 2-handles over the centered 0-framed 2-handle to realize the canceling position of two 1-handles.
Recall the definition of the centered 0-framed 2-handle in Section~\ref{handledecknotsurgery}.
Therefore, we can cancel all 1-handles of $E(n)_{2,q}$.

Next, we create two 0-framed 2-handles $h_2, h_3$ as two 2-/3-canceling pairs in the diagram.
Slide the 0-framed 2-handles over $h_0'$ and $h_1'$ respectively in the similar manner.
Then we obtain two pairs of meridian 2-handles with linking $(-2n-1,-2n-1)$, or $(-2n,-2n-2)$.
We untie the linking by using the rest of $-2$-chain with length $8n-3$.
To unlink the two linking, we need a $-2$-chain with length $2n+2$ and $2n+2$ or $2n+1$ and $2n+3$.
Hence, totally, we need a $-2$-chain with length
$$(2n+2)+1+(2n+2)=(2n+1)+1+(2n+3)=4n+5.$$
Since $4n+5\le 8n-3$ holds for any integer $n\ge2$, this is realized.
We obtain four meidian 2-handles with framings
$$
\begin{cases}
(-4n-3,-4n-3,-4n-3,-4n-3)&n\text{: odd}\\
(-4n,-4n-2,-4n-4,-4n-6)&n\text{: even}
\end{cases}$$


Using the centered 0-framed 2-handles, we distribute the four components to each meridian of 1-handles.
These are canceling pairs.
Therefore, we can cancel all 1-handles of $E(n)_{3,q}$ or $E(n)_{4,q}$.

\begin{figure}[htbp]
\centering
\begin{overpic}[
]{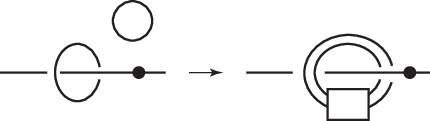}

\put(20,22){$0$}
\put(3,13){$-n$}

\put(76,2){$-n$}
\put(60,12){$-n$}
\put(79,21){$-n$}
\end{overpic}
\caption{A handle slide. The box including $-n$ stands for the $-n$-full twist.}
\label{handlecal3}
\end{figure}
\begin{figure}[htbp]
\centering
\begin{overpic}[
]{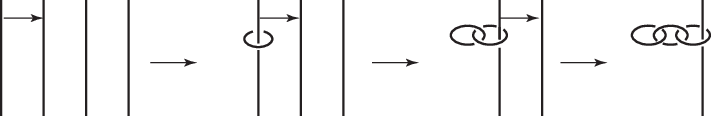}

\put(-5,1){$-1$}
\put(1,1){$-1$}
\put(7,1){$-1$}
\put(13,1){$-1$}

\put(29,10){$-2$}
\put(31,1){$-1$}
\put(37,1){$-1$}
\put(43,1){$-1$}

\put(60,13){$-2$}
\put(64,7){$-2$}
\put(65,1){$-1$}
\put(71,1){$-1$}

\put(86,14){$-2$}
\put(91,7){$-2$}
\put(93,14){$-2$}
\put(94,1){$-1$}
\end{overpic}
\caption{Handle slides to construct a chain of framed link.}
\label{chain}
\end{figure}

\begin{figure}[htbp]
\centering
\begin{overpic}[
]{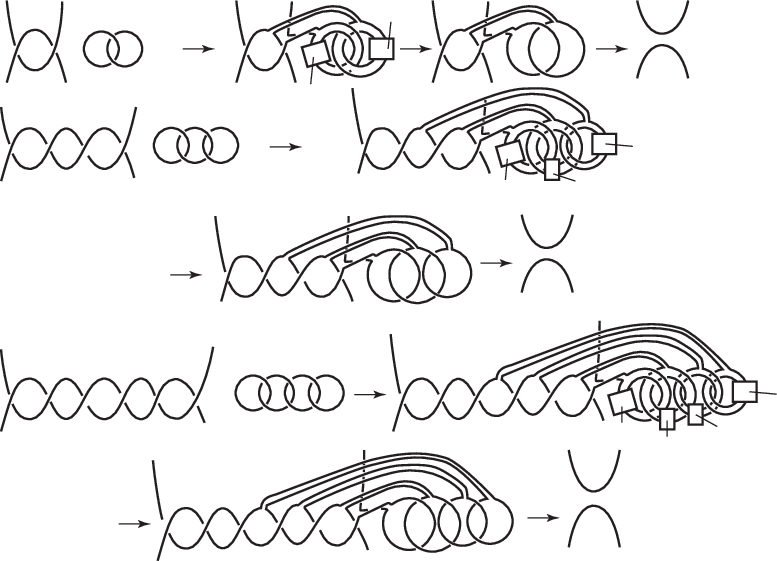}

\put(3,69){$-1$}
\put(3,60){$-1$}
\put(10,69){$-2$}
\put(15,69){$-2$}

\put(31,69){$-3$}
\put(31,60){$-3$}

\put(37,59){$-2$}
\put(48,70){$-2$}

\put(56,69){$-3$}
\put(56,60){$-3$}

\put(88,69){$-3$}
\put(88,64){$-3$}

\put(1,57){$-2$}
\put(1,47){$-2$}
\put(20,57){$-2$}
\put(24,57){$-2$}
\put(28,57){$-2$}

\put(46,57){$-4$}
\put(46,47){$-6$}
\put(63,47){$-2$}
\put(74,48){$-2$}
\put(82,53){$-2$}

\put(29,40){$-4$}
\put(29,31){$-6$}

\put(74,41){$-6$}
\put(74,36){$-4$}

\put(1,25){$-3$}
\put(1,15){$-3$}
\put(28,25){$-2$}
\put(32,25){$-2$}
\put(36,25){$-2$}
\put(41,25){$-2$}

\put(51,25){$-7$}
\put(57,25){$-7$}
\put(78,15){$-2$}
\put(83,13){$-2$}
\put(91,15){$-2$}
\put(100,20){$-2$}

\put(21,8){$-7$}
\put(27,8){$-7$}

\put(80,9){$-7$}
\put(80,3){$-7$}
\end{overpic}
\caption{Unlinking processes of $n=1$, $2$, $3$ using a $-2$-chain with length $n+1$.}
\label{handlecal2}
\end{figure}
\hfill$\Box$\\

Finally, we explain the unlinking process in the previous proof.
For a chain of ($n+1$)-component unknots, we perform band moves by $n+1$ parallel bands connecting maximal points of $-n$-linking.
Then we obtain the first poicture in Figure~\ref{handlecal4}.
We have only to show that this picture is isotopic to unlinked two arcs.
The two indicated crossings in the picture can be simultaneously changed by an isotopy.
Then, we obtain the second picuture.
Shrinking the right most component, we obtain the third picture.
This picture is the one replaced by $n\to n-1$ for the first picture.
In the same way, iterating this process, we obtain the picture which is isotopic to the picture having $-2$-linking and 3 components as illustrated in the last row. 
This picture is isotopic to unlinked arcs as proven in the previous proof.
\begin{figure}[htbp]
\centering
\begin{overpic}[
]{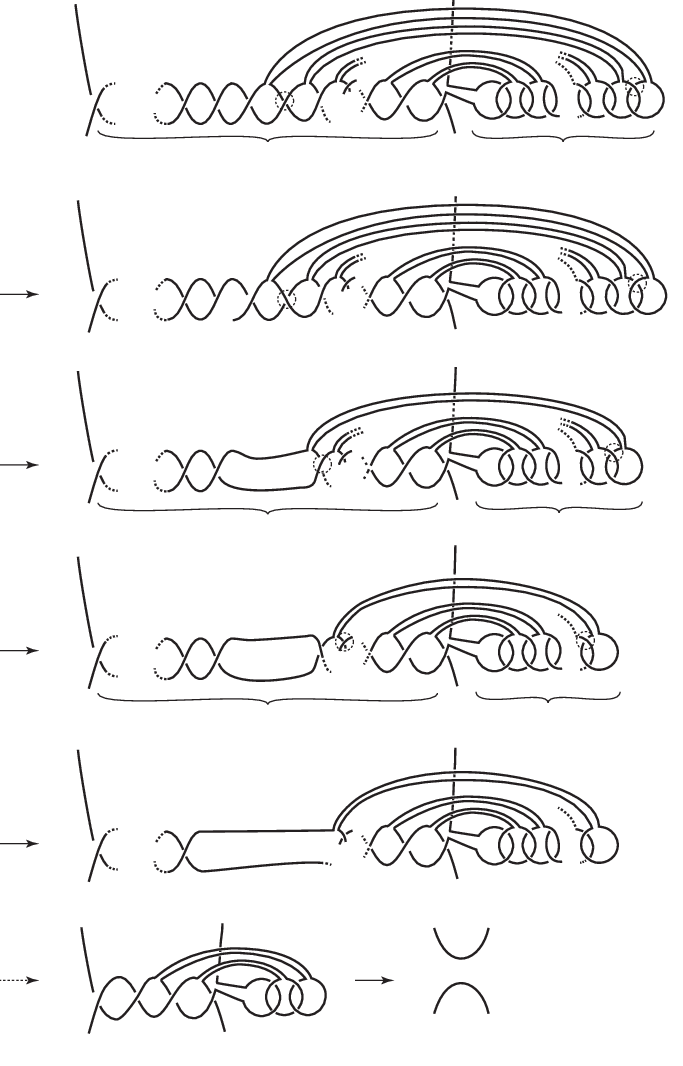}

\put(20,84.5){$-n$-linking.}
\put(45,84.5){$n\!+\!1$ components}

\put(20,50){$-n+1$-linking.}
\put(44,50){$n$ components.}

\put(20,32){$-n\!+\!2$-linking.}
\put(43,32){$n\!-\!1$\! components.}

\put(7,1){$-2$-linking.}
\put(22,1){$3$ components.}
\end{overpic}
\caption{A general unlinking process.}
\label{handlecal4}
\end{figure}

\noindent
Moto-o Tange\\
University of Tsukuba, \\
Ibaraki 305-8502, Japan. \\
tange@math.tsukuba.ac.jp

\end{document}